\documentclass[a4paper,11pt]{article}
%\dnumocumentclass[twocolumn,11pt]{article}
\usepackage{graphicx,epsfig}%,picins}
\usepackage{fancyhdr,fancybox}
\usepackage{indentfirst}
\usepackage{titlesec}
\usepackage{verbatim}
\usepackage[sort&compress, numbers]{natbib}
\usepackage{array,dcolumn,tabularx}
\usepackage{setspace}
\usepackage{geometry}
\usepackage{extarrows,chemarrow,xypic} % »­¹«ÊœÖÐµÄŒýÍ·
\usepackage[small]{caption2}
\usepackage{sectsty}
\usepackage{microtype}
\DisableLigatures[f]{encoding = *, family = * }
%\usepackage[none]{hyphenat}

%%%%%%%%%% ×ÖÌåÖ§³Ö %%%%%%%%%%%%
%\usepackage{ccmap}                  % Ê¹pdfLatex Éú³ÉµÄÎÄŒþÖ§³ÖžŽÖÆµÈ
%\usepackage{CJK,CJKnumb,CJKulem}    % ÖÐÎÄÖ§³Ö
\usepackage{times}     % °üÀš Times Roman + Helvetica + Courier
%\usepackage{palatino} % °üÀš Palatino + Helvetica + Courier
%\usepackage{newcent}  % °üÀš New Century Schoolbook + Avant Garde + Courier
%\usepackage{bookman}  % °üÀš Bookman + Avant Garde + Courier

%%%%%%%%%% ÊýÑ§·ûºÅ¹«Êœ %%%%%%%%%%
\usepackage{latexsym}
\usepackage{amsmath}   % AMS LaTeXºê°ü
\usepackage{amssymb}   % ÓÃÀŽÅÅ°æÆ¯ÁÁµÄÊýÑ§¹«Êœ
\usepackage{amsbsy}
\usepackage{amsthm}
\usepackage{amsfonts}
\usepackage{mathrsfs}  % Ó¢ÎÄ»šÌå×ÖÌå
\usepackage{bm}        % ÊýÑ§¹«ÊœÖÐµÄºÚÐ±Ìå
\usepackage{relsize}   % µ÷Õû¹«Êœ×ÖÌåŽóÐ¡£º\mathsmaller, \mathlarger
\usepackage{hyperref}  % Éú³ÉÊéÇ©

%×ÖÌå

%×ÖºÅ
\newcommand{\paperfont}{\fontsize{12pt}{1.3\baselineskip}\selectfont}
\sectionfont{\fontsize{13}{15}\selectfont}
%Ò³±ßŸà
\geometry{top=1in,bottom=1in,left=1in,right=1in}
%\geometry{top=0.65in,bottom=0.65in,left=0.7in,right=0.7in}
%¶ÎÂäËõœø
\parindent 4ex

%%%%%%%%%% ÕýÎÄ %%%%%%%%%%
\begin{document}

% Éú³ÉÄ¿ÂŒ
%\tableofcontents

%%%%%%%%%% ¶šÀíÀà»·Ÿ³µÄ¶šÒå %%%%%%%%%%
\theoremstyle{definition}
\makeatletter
\thm@headfont{\bf}
\makeatother
\newtheorem{theorem}{Theorem}[section]
\newtheorem{definition}[theorem]{Definition}
\newtheorem{lemma}[theorem]{Lemma}
\newtheorem{proposition}[theorem]{Proposition}
\newtheorem{corollary}[theorem]{Corollary}
\newtheorem{remark}[theorem]{Remark}
\newtheorem{example}[theorem]{Example}
\newtheorem{assumption}[theorem]{Assumption}
\numberwithin{equation}{section}

%%%%%%%%%% Ò³ÃŒºÍÒ³œÅµÄÉèÖÃ %%%%%%%%%%
\lhead{}
\rhead{}
\lfoot{}
\rfoot{}

%%%%%%%%%% Ò»Ð©ÖØ¶šÒå %%%%%%%%%%
\renewcommand{\refname}{References}
\renewcommand{\figurename}{Figure}
\renewcommand{\tablename}{Table}
\renewcommand{\proofname}{Proof}

%%%%%%%%%% ·ûºÅÖØ¶šÒå %%%%%%%%%%
\newcommand{\dnumiag}{\mathrm{diag}}
\newcommand{\tr}{\mathrm{tr}}
\newcommand{\dnum}{\mathrm{d}}

% MATH -------------------------------------------------------------------
\newcommand{\Enum}{\mathbb{E}}
\newcommand{\Pnum}{\mathbb{P}}
\newcommand{\Rnum}{\mathbb{R}}
\newcommand{\Cnum}{\mathbb{C}}
\newcommand{\Znum}{\mathbb{Z}}
\newcommand{\Nnum}{\mathbb{N}}
\newcommand{\abs}[1]{\left\vert#1\right\vert}
\newcommand{\set}[1]{\left\{#1\right\}}
\newcommand{\norm}[1]{\left\Vert#1\right\Vert}
\newcommand{\innp}[1]{\langle {#1}]}

%%%%%%%%%% ÂÛÎÄ±êÌâ¡¢×÷ÕßµÈ %%%%%%%%%%
\title{\textbf{Moderate maximal inequalities for the Ornstein-Uhlenbeck process}}
\author{Chen Jia$^{1}$,\;\;\;Guohuan Zhao$^{2,*}$ \\
\footnotesize $^1$Department of Mathematical Sciences, University of Texas at Dallas, Richardson, TX 75080, U.S.A.\\
\footnotesize $^2$Academy of Mathematics and Systems Science, Chinese Academy of Sciences, Beijing 100190, China.\\
\footnotesize $^*$Author to whom correspondence should be addressed. \\
\footnotesize E-mail: jiac@utdallas.edu (C. Jia), zhaoguohuan@gmail.com (G. Zhao)\\}
\date{}                              % ÈÕÆÚ
\maketitle                           % Éú³É±êÌâ
%\tableofcontents                    % ²åÈëÄ¿ÂŒ
\thispagestyle{empty}                % Ê×Ò³ÎÞÒ³ÃŒÒ³œÅ

%%%%%%%%%% ÕýÊœÊ¹ÓÃ×ÖÌå %%%%%%%%%%%
\paperfont

%%%%%%%%%% ÕªÒª %%%%%%%%%%
\begin{abstract}
The maximal inequalities for diffusion processes have drawn increasing attention in recent years. However, the existing proof of the $L^p$ maximum inequalities for the Ornstein-Uhlenbeck process was dubious. Here we give a rigorous proof of the moderate maximum inequalities for the Ornstein-Uhlenbeck process, which include the $L^p$ maximum inequalities as special cases and generalize the remarkable $L^1$ maximum inequalities obtained by Graversen and Peskir [P. Am. Math. Soc., 128(10):3035-3041, 2000]. As a corollary, we also obtain a new moderate maximal inequality for continuous local martingales, which can be viewed as a supplement of the classical Burkholder-Davis-Gundy inequality. \\

\noindent % ²»Ëõœø
\textbf{Keywords}: moderate function, law of the iterated logarithm, good $\lambda$ inequality \\

\noindent
\textbf{AMS Subject Classifications}: 60H10, 60J60, 60J65, 60G44
\end{abstract}

%%%%%%%%%% ÕýÎÄ %%%%%%%%%%
\section{Introduction}
The Ornstein-Uhlenbeck process, which is the solution to the Langevin equation
\begin{equation*}
dX_t = -\alpha X_tdt+dW_t,\;\;\;X_0 = 0,
\end{equation*}
is one of the most important kinetic models in statistical mechanics, where $\alpha>0$ and $W$ is a standard Wiener process. It describes the velocity of an \emph{underdamped} Brownian particle or the position of an \emph{overdamped} Brownian particle driven by the harmonic potential. An important question is how far the Brownian particle can travel before a given time. This problem is closely related to the maximal inequalities in probability theory.

The $L^p$ maximal inequalities for martingales are one of the classical results in probability theory. Let $M$ be a continuous local martingale vanishing at zero. The Burkholder-Davis-Gundy (BDG) inequality \cite{revuz1999continuous} claims that for any $p>0$, there exist two positive constants $c_p$ and $C_p$ such that for any stopping time $\tau$ of $M$,
\begin{equation*}
c_p\Enum[M]^{p/2}_\tau \leq \Enum[\sup_{0\leq t\leq\tau}|M_t|]^p \leq C_p\Enum[M]^{p/2}_\tau.
\end{equation*}
where $[M]$ is the quadratic variation process of $M$ (see \cite[Chapter IV, Exercise 4.25]{revuz1999continuous} for a moderate version of this inequality). 

Over the past two decades, significant progress has been made in the maximal inequalities for diffusion processes \cite{graversen1998maximal, graversen1998optimal, graversen2000maximal, peskir2001bounding, yan2004ratio, yan2005lp, yan2005lpestimates, lyulko2014sharp}. In particular, Graversen and Peskir \cite{graversen2000maximal} proved the following % new add
remarkable 
% end new 
$L^1$ maximum inequality for the Ornstein-Uhlenbeck process by using Lenglart's domination principle: there exist two positive constants $c$ and $C$ independent on $\alpha$ such that for any stopping time $\tau$ of $X$,
\begin{equation*}
\frac{c}{\alpha^{1/2}}\Enum\log^{1/2}(1+\alpha\tau) \leq \Enum[\sup_{0\leq t\leq\tau}|X_t|] \leq \frac{C}{\alpha^{1/2}}\Enum\log^{1/2}(1+\alpha\tau).
\end{equation*}
Subsequently, Peskir \cite{peskir2001bounding} established the $L^1$ maximum inequalities for a large class of diffusion processes and obtained satisfactory results. However, the $L^p$ maximum inequalities for diffusion processes turn out to be more difficult, even for the Ornstein-Uhlenbeck process. As an attempt, Yan et al. \cite{yan2004ratio, yan2005lp, yan2005lpestimates} studied the $L^p$ maximum inequalities for a class of diffusion processes. Although their ideas are fairly nice, their detailed proofs are questionable because they mistakenly regarded the random time $TI_{\{S<T\}}$ as a stopping time, where $S$ and $T$ are two stopping times with $S\leq T$ \cite[Page 6, Lines 2 and 10]{yan2005lp}.

In this paper, we give a rigorous proof of the \emph{moderate maximum inequalities} for the Ornstein-Uhlenbeck process, which include the $L^p$ maximum inequalities as special cases. 
% new 
Our method is based on using ``good-$\lambda$" inequalities  (Lemma \ref{goodlambda}) introduced by Burkerholder (cf. \cite{burholderdistribution}). 
% end new 
%Our method is totally different from that the previous method based on Lenglart's domination principle. 
As a corollary, we also obtain a new moderate maximal inequality for the Brownian motion and general continuous local martingales, which can be viewed as a good supplement to the classical BDG inequality.

\section{Results}
Let $X$ be the one-dimensional Ornstein-Uhlenbeck process starting from zero, which is the solution to the stochastic differential equation
\begin{equation}\label{system}
dX_t = -\alpha X_tdt+dW_t,\;\;\;X_0 = 0,
\end{equation}
where $\alpha>0$ and $W$ is a standard Brownian motion defined on some filtered probability space $(\Omega,\{\mathscr{F}_t\},\Pnum)$. %We next introduce a key definition.
% new 
We next introduce the conception of moderate function(see also  \cite[p.164, line 11]{revuz1999continuous} ). 
% end nwe\\

\begin{definition}
A function $f:\Rnum_+\rightarrow\Rnum_+$ is called \emph{moderate} if
\begin{itemize}\setlength{\itemsep}{1pt}\setlength{\parsep}{1pt}\setlength{\parskip}{1pt}
\item[(a)] it is a continuous increasing function vanishing at zero and
\item[(b)] there exists $\lambda>1$ and $\gamma<\infty$ such that
\begin{equation}\label{requirement}
f(\lambda t)\leq \gamma f(t)\quad \mbox{for all} \ t>0. 
\end{equation}
\end{itemize}
\end{definition}

In particular, $f(t) = t^p$ is a moderate function for any $p>0$. Our main result is the following moderate maximum inequality.
\begin{theorem}\label{main}
For any moderate function $f$, there exists two positive constants $c_{\alpha,f}$ and $C_{\alpha,f}$ such that for any stopping time $\tau$ with respect to $\{\mathscr{F}_t\}$,
\begin{equation*}
c_{\alpha,f}\Enum f(\log^{1/2}(1+\alpha\tau))
\leq \Enum\left[\sup_{0\leq t\leq\tau}f(|X_t|)\right]
\leq C_{\alpha,f}\Enum f(\log^{1/2}(1+\alpha\tau)).
\end{equation*}
In particular, for any $p>0$, there exists two positive constants $c_p$ and $C_p$ independent of $\alpha$ such that for any stopping time $\tau$ of $X$,
\begin{equation*}
\frac{c_p}{\alpha^{p/2}}\Enum\log^{p/2}(1+\alpha\tau)
\leq \Enum\left[\sup_{0\leq t\leq\tau}|X_t|^p\right]
\leq \frac{C_p}{\alpha^{p/2}}\Enum\log^{p/2}(1+\alpha\tau).
\end{equation*}
\end{theorem}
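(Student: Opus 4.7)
The plan is to derive Theorem \ref{main} from two ``good-$\lambda$'' distributional inequalities between $S^{*}_\tau := \sup_{0\leq t\leq\tau}|X_t|$ and $L_\tau := \alpha^{-1/2}\log^{1/2}(1+\alpha\tau)$, and then invoke Lemma \ref{goodlambda} to lift each to the moderate-function level. Fixing $\beta>1$, I would establish that, for every sufficiently small $\delta>0$ and every $\lambda>0$,
\begin{equation*}
\Pnum(S^{*}_\tau > \beta\lambda,\, L_\tau \leq \delta\lambda) \leq \psi_1(\delta)\,\Pnum(S^{*}_\tau > \lambda), \qquad
\Pnum(L_\tau > \beta\lambda,\, S^{*}_\tau \leq \delta\lambda) \leq \psi_2(\delta)\,\Pnum(L_\tau > \lambda),
\end{equation*}
with $\psi_i(\delta)\to 0$ as $\delta\to 0$. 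By Lemma \ref{goodlambda} these yield, respectively, $\Enum f(S^{*}_\tau)\leq C\Enum f(L_\tau)$ and $\Enum f(L_\tau)\leq C\Enum f(S^{*}_\tau)$ for every moderate $f$; the $L^p$ specialization and the explicit $\alpha$-dependence then follow from $f(t)=t^p$ and the scaling $X^{(\alpha)}_t\stackrel{d}{=}\alpha^{-1/2}X^{(1)}_{\alpha t}$.

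For the upper inequality I would condition at the hitting time $\sigma_\lambda := \inf\{t:|X_t|\geq\lambda\}$. On $\{L_\tau\leq\delta\lambda\}$ we have $\tau\leq T:=\alpha^{-1}(e^{\alpha\delta^2\lambda^2}-1)$, and the strong Markov property reduces matters to bounding $\sup_{|x|=\lambda}\Pnum_x(\sup_{0\leq t\leq T}|X_t|>\beta\lambda)$. Writing $X_t=xe^{-\alpha t}+Y_t$ with $Y$ an OU started from $0$, this is dominated by $\Pnum_0(\sup_{t\leq T}|Y_t|>(\beta-1)\lambda)$. I would handle this in two regimes: for $\alpha\lambda^2\leq 1/\delta^2$, the direct time-change bound $\sup_{t\leq T}|Y_t|\leq\sup_{s\leq\sigma(T)}|B_s|$ with $\sigma(t)=(e^{2\alpha t}-1)/(2\alpha)$ combined with the reflection principle yields the needed decay; for $\alpha\lambda^2>1/\delta^2$, partitioning $[0,T]$ into $\lceil\alpha T\rceil$ pieces of length $1/\alpha$ and applying a Gaussian tail bound on each sub-interval via the Markov property gives a bound of order $e^{\alpha\delta^2\lambda^2-c\alpha(\beta-1)^2\lambda^2}$, exponentially small once $\delta\ll\beta-1$.

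For the lower inequality I would condition at the deterministic time $T(\lambda):=\alpha^{-1}(e^{\alpha\lambda^2}-1)$ and use the Markov property to bound the probability by $\Pnum(\tau>T(\lambda))\cdot\sup_{|x|\leq\delta\lambda}\Pnum_x(\sup_{0\leq s\leq T'-T(\lambda)}|X_s|\leq\delta\lambda)$ with $T'=\alpha^{-1}(e^{\alpha\beta^2\lambda^2}-1)$. The supremum is a tube-staying probability for OU in $[-\delta\lambda,\delta\lambda]$, which decays geometrically: letting $q(s):=\sup_{|y|\leq\delta\lambda}\Pnum_y(\tau_{\delta\lambda}>s)$ for the exit time from $[-\delta\lambda,\delta\lambda]$, the strong Markov property yields $q(t+s)\leq q(t)q(s)$, and choosing $T_0$ comparable to the mean exit time (of order $(\delta\lambda)^2$ when $\alpha(\delta\lambda)^2\lesssim 1$, and of Kramers order $\alpha^{-1}e^{\alpha(\delta\lambda)^2}$ above that) makes $q(T_0)\leq 1/2$; since $T'-T(\lambda)$ contains enough copies of $T_0$, we obtain $\psi_2(\delta)\to 0$ uniformly in $\lambda$. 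The main obstacle throughout is exactly this uniformity in $\lambda$: the OU transitions between Brownian and Kramers behavior at the scale $\lambda\sim\alpha^{-1/2}$, forcing a case split in each estimate; but once the two good-$\lambda$ inequalities are secured, Lemma \ref{goodlambda} together with the growth condition \eqref{requirement} on $f$ delivers the theorem at once.
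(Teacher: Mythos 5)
Your proposal is correct and follows the same skeleton as the paper: two good-$\lambda$ inequalities between $\sup_{t\le\tau}|X_t|$ and $\log^{1/2}(1+\alpha\tau)$, obtained by conditioning with the (strong) Markov property at the hitting time $\sigma_\lambda$ in one direction and at a deterministic time in the other, using the shift identity $X^x_t=X^0_t+x\mathrm{e}^{-\alpha t}$ to reduce to the process started from $0$, and then invoking Burkholder's good-$\lambda$ lemma (Lemma \ref{goodlambda}) plus the scaling $X_t=Y_{\alpha t}/\sqrt{\alpha}$ for the $L^p$ statement. Where you genuinely diverge is in the two one-sided tail estimates that feed into the good-$\lambda$ inequalities. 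The paper isolates these as Lemmas \ref{upper} and \ref{lower}: the ``stays small for a long time'' bound is obtained by solving $\tfrac12 u''-\alpha x u'=1$ explicitly to get $\Enum\tau_x\le C\mathrm{e}^{2\alpha x^2}$ and applying Chebyshev, and the ``gets large too fast'' bound is deduced from the law of the iterated logarithm applied through the time change \eqref{transform}. You instead prove the first via submultiplicativity $q(t+s)\le q(t)q(s)$ of the tube-staying probability together with a mean-exit-time (Brownian vs.\ Kramers) estimate, and the second via a reflection-principle bound in the diffusive regime and a union bound with Gaussian tails over $O(\alpha T)$ subintervals in the stationary regime. Both routes require the same case split at the scale $\alpha\lambda^2\sim 1$ (the paper's $\lambda\lessgtr\lambda_0$ dichotomy). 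Your version is more quantitative and self-contained --- it avoids the LIL, which in the paper only yields a qualitative $\phi(\delta)\to0$ and requires a small argument to pass from $\limsup_{t\to\infty}|X_t|/\log^{1/2}t$ to the running maximum $X^*_t$ --- at the cost of a somewhat longer union-bound computation; the paper's version gets the exit-time moment for free from an explicit ODE solution. Either set of ingredients completes the proof.
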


%The above theorem implies a moderate maximum inequality for the Brownian motion. 
% new 
The above theorem implies two moderate maximum inequalities for the Brownian motion and continuous local martingale which are in line with
Corollary 2.7 and 2.8 in \cite{graversen2000maximal}. For the reader's convenience, we provide short proofs here.
% end new 
\begin{corollary}\label{BM}
For any moderate function $f$, there exists two positive constants $c_f$ and $C_f$ such that for any stopping time $\tau$ of $W$,
\begin{equation*}
\begin{split}
c_f\Enum f\big(\log^{1/2}(1+\log(1+\tau))\big)
&\leq \Enum\left[\sup_{0\leq t\leq\tau}f\left(\frac{|W_t|}{\sqrt{1+t}}\right)\right]\\
&\leq C_f\Enum f\big(\log^{1/2}(1+\log(1+\tau))\big).
\end{split}
\end{equation*}
In particular, for any $p>0$, there exists two positive constants $c_p$ and $C_p$ such that for any stopping time $\tau$ of $W$,
\begin{equation*}
c_p\Enum\log^{p/2}(1+\log(1+\tau)) \leq \Enum\left[\sup_{0\leq t\leq\tau}\frac{|W_t|^p}{(1+t)^{p/2}}\right] \leq C_p\Enum\log^{p/2}(1+\log(1+\tau)).
\end{equation*}
\end{corollary}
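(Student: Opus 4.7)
The plan is to reduce Corollary \ref{BM} to Theorem \ref{main} via the classical deterministic time change that converts Brownian motion into an Ornstein--Uhlenbeck process. Setting
\begin{equation*}
X_s := e^{-s/2}\,W_{e^s - 1},\qquad s \geq 0,
\end{equation*}
a standard It\^o/time-change computation (or a direct verification of the Gaussian covariance $\mathrm{Cov}(X_s,X_t) = e^{-(s+t)/2}(e^{s\wedge t}-1)$) shows that $X$ solves \eqref{system} with $\alpha = 1/2$, driven by a standard Brownian motion adapted to the filtration $\mathscr{G}_s := \mathscr{F}_{e^s-1}$. Inverting $u = e^s-1$ yields $W_u/\sqrt{1+u} = X_{\log(1+u)}$, hence for any $\{\mathscr{F}_t\}$-stopping time $\tau$,
\begin{equation*}
\sup_{0\leq u\leq\tau} f\!\bigl(|W_u|/\sqrt{1+u}\bigr) \;=\; \sup_{0\leq s\leq\sigma} f(|X_s|),\qquad \sigma := \log(1+\tau),
\end{equation*}
and $\sigma$ is a $\{\mathscr{G}_s\}$-stopping time because $\{\sigma\leq s\} = \{\tau\leq e^s-1\}\in\mathscr{F}_{e^s-1} = \mathscr{G}_s$.

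Taking expectations and invoking Theorem \ref{main} with $\alpha = 1/2$ and stopping time $\sigma$ sandwiches $\Enum[\sup_{0\leq u\leq\tau} f(|W_u|/\sqrt{1+u})]$ between positive multiples of
\begin{equation*}
\Enum f\!\bigl(\log^{1/2}(1 + \tfrac12\sigma)\bigr) \;=\; \Enum f\!\bigl(\log^{1/2}(1 + \tfrac12\log(1+\tau))\bigr),
\end{equation*}
with constants depending only on $f$. The elementary inequality $\tfrac12\log(1+y) \leq \log(1+y/2) \leq \log(1+y)$ valid for $y\geq 0$ shows that this argument of $f$ differs from $\log^{1/2}(1+\log(1+\tau))$ by a multiplicative factor in $[1/\sqrt{2},\,1]$. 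Iterating the doubling bound \eqref{requirement} a finite number of times yields $f(cx)\leq \Gamma_c f(x)$ for any fixed $c>0$ with $\Gamma_c$ depending only on $c$ and $f$; applied on both sides this absorbs the $\tfrac12$ discrepancy and produces the claimed two-sided estimate. The $L^p$ statement follows by specialising $f(t)=t^p$.

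The only step requiring genuine care is the verification that $X$ is a bona fide Ornstein--Uhlenbeck process adapted to $\{\mathscr{G}_s\}$ and that $\sigma$ is a $\{\mathscr{G}_s\}$-stopping time, so that Theorem \ref{main} is legitimately applicable to it; this is entirely standard but must be checked, since the gap in \cite{yan2005lp} highlighted in the introduction occurred precisely at the level of stopping-time manipulations. Once that reduction is secured, the remaining calculation is a routine comparison of arguments of a moderate function and is substantially less delicate than the good-$\lambda$ argument underpinning Theorem \ref{main}.
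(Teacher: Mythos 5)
Your proof is correct and follows essentially the same route as the paper's: the deterministic time change identifying $W_u/\sqrt{1+u}$ with an Ornstein--Uhlenbeck process evaluated at $\log(1+u)$, transfer of the stopping time through that change, an application of Theorem \ref{main}, and absorption of the resulting factor $\tfrac12$ via the doubling property of moderate functions. Your version is somewhat more careful than the paper's (explicit filtration $\mathscr{G}_s=\mathscr{F}_{e^s-1}$, the explicit inequality $\tfrac12\log(1+y)\leq\log(1+y/2)\leq\log(1+y)$ in place of an asymptotic equivalence, and the choice $\alpha=1/2$ which removes the extraneous $1/\sqrt{2\alpha}$ inside $f$), but the substance is identical.
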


\begin{proof}
It is easy to check that \eqref{system} has the following explicit solution:
\begin{equation}\label{Eq-X}
X_t = \int_0^t\mathrm{e}^{\alpha(s-t)}dW_s.
\end{equation}
%Since $X$ is a Gaussian process, 
It is well know that there exists a Brownian motion $B$ such that
\begin{equation}\label{transform}
X_t = \frac{1}{\sqrt{2\alpha}}\mathrm{e}^{-\alpha t}B_{\mathrm{e}^{2\alpha t}-1} = \frac{1}{\sqrt{2\alpha}}\frac{B_{H(t)}}{\sqrt{H(t)+1}},
\end{equation}
where $H(t) = \mathrm{e}^{2\alpha t}-1$. For any stopping time $\tau$ of $B$, it is easy to check that $H^{-1}(\tau) = \log(1+\tau)/2\alpha$ is a stopping time of $X$. Thus it follows from Theorem \ref{main} that
\begin{equation*}
\begin{split}
\Enum\left[\sup_{0\leq t\leq\tau}f\left(\frac{1}{\sqrt{2\alpha}}\frac{|B_t|}{\sqrt{1+t}}\right)\right]
&\sim \Enum f(\log^{p/2}(1+\alpha H^{-1}(\tau))\\
&= \Enum f(\log^{p/2}(1+\frac{1}{2}\log(1+\tau)).
\end{split}
\end{equation*}
The desired result follows from the definition the moderate function and the fact that
\begin{equation*}
\log(1+\frac{1}{2}\log(1+x)) \sim \log(1+\log(1+x))
\end{equation*}
as $x\to 0$ or $x\to\infty$.
\end{proof}

Since any continuous local martingale is a time change of the Brownian motion, the above corollary implies a moderate maximal inequality for continuous local martingales.
\begin{corollary}
Let $M$ be a continuous local martingale vanishing at zero with quadratic variation process $[M]$. For any moderate function $f$, there exists two positive constants $c_f$ and $C_f$ such that for any stopping time $\tau$ of $M$,
\begin{equation*}
\begin{split}
c_f\Enum f\big(\log^{1/2}(1+\log(1+[M]_\tau))\big)
&\leq \Enum\left[\sup_{0\leq t\leq\tau}f\left(\frac{|M_t|}{\sqrt{1+[M]_t}}\right)\right]\\
&\leq C_f\Enum f\big(\log^{1/2}(1+\log(1+[M]_\tau))\big).
\end{split}
\end{equation*}
In particular, for any $p>0$, there exists two positive constants $c_p$ and $C_p$ independent of $M$ such that for any stopping time $\tau$ of $M$,
\begin{equation*}
c_p\Enum\log^{p/2}(1+\log(1+[M]_\tau)) \leq \Enum\left[\sup_{0\leq t\leq\tau}\frac{|M_t|^p}{(1+[M]_t)^{p/2}}\right] \leq C_p\Enum\log^{p/2}(1+\log(1+[M]_\tau)).
\end{equation*}
\end{corollary}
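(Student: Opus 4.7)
The plan is to apply the Dambis--Dubins--Schwarz (DDS) time-change theorem to reduce the continuous local martingale case directly to Corollary~\ref{BM} for the Brownian motion. This parallels the usual strategy for lifting Brownian inequalities to continuous martingales.

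First, I would invoke DDS: since $M$ is a continuous local martingale with $M_0=0$, there exists (possibly after enlarging $(\Omega,\{\mathscr{F}_t\},\Pnum)$ with an independent Brownian motion) a standard Brownian motion $B$ such that
\begin{equation*}
M_t \;=\; B_{[M]_t} \qquad \text{for every } t\ge 0.
\end{equation*}
With respect to the time-changed filtration $\mathscr{G}_s := \mathscr{F}_{T_s}$, where $T_s := \inf\{t\ge 0 : [M]_t > s\}$, the process $B$ is a $\mathscr{G}$-Brownian motion, and for any $\mathscr{F}$-stopping time $\tau$ the random time $[M]_\tau$ is a $\mathscr{G}$-stopping time.

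Second, I would rewrite the path-wise supremum using this time change. Because $t\mapsto [M]_t$ is continuous and non-decreasing, its image on $[0,\tau]$ is precisely $[0,[M]_\tau]$; on any flat interval of $[M]$ the process $M$ is also constant, so the substitution $u=[M]_t$ gives
\begin{equation*}
\sup_{0\le t\le \tau} f\!\left(\frac{|M_t|}{\sqrt{1+[M]_t}}\right) \;=\; \sup_{0\le u\le [M]_\tau} f\!\left(\frac{|B_u|}{\sqrt{1+u}}\right)
\end{equation*}
pathwise. Taking expectations and applying Corollary~\ref{BM} to $B$ with the $\mathscr{G}$-stopping time $[M]_\tau$ yields both the upper and the lower bound in one stroke. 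The $L^p$ specialization is just $f(x)=x^p$.

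The only delicate point is the standard DDS technicality when $[M]_\infty$ is finite with positive probability: one must extend the time-changed martingale beyond the terminal time $[M]_\infty$ by pasting on an independent Brownian motion in order to realize a genuine Brownian motion $B$ on all of $\Rnum_+$. Since the supremum above only probes $B$ on $[0,[M]_\tau]\subseteq[0,[M]_\infty]$, this enlargement is immaterial for the estimate, and the reduction to Corollary~\ref{BM} goes through unchanged.
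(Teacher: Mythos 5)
Your proposal is correct and follows exactly the paper's own route: the paper also invokes the Dambis--Dubins--Schwarz representation $M_t = B_{[M]_t}$ and reduces to Corollary~\ref{BM} with the stopping time $[M]_\tau$. You simply spell out the technical points (the time-changed filtration, the pathwise identification of the suprema, and the extension past $[M]_\infty$) that the paper leaves implicit.
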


\begin{proof}
Since $M$ is a continuous local martingale, there exists a Brownian motion $B$ such that $M_t = B_{[M]_t}$. Thus the desired result follows directly from Corollary \ref{BM}.
\end{proof}

According to the BDG inequality, the $L^p$ maximum of a continuous local martingale $M$ in average behaves as $[M]^{p/2}$. The above corollary shows that the $L^p$ maximum of $M$, normalized by $(1+[M])^{p/2}$, in average behaves as $\log^{p/2}(1+\log(1+[M]))$. The relationship between the BDG inequality and our result is rather similar to that between% the law of large numbers and the central limit theorem.
the central limit theorem and the law of iterated logarithm.

\section{Proof of Theorem \ref{main}}
Let $X^*$ be the maximum process of $|X|$ defined by
\begin{equation*}
X^*_t=\sup_{0\leq s\leq t}|X_s|.
\end{equation*}
To prove our main result, we first need two lemmas.
\begin{lemma}\label{upper}
There exists a function $\phi:\Rnum_+\to \Rnum_+$ satisfying $\phi(\delta)\to 0$ as $\delta\to 0$ such that for any $t\geq 1$ and $\delta>0$,
\begin{equation}
\Pnum(X^*_t<\delta\log^{1/2}t) \leq \phi(\delta).
\end{equation}
\end{lemma}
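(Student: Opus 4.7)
The plan is to exploit the Markov structure of $X$ by sampling it at equispaced times, applying a uniform one-step Gaussian bound, and handling large and small $t$ separately via a threshold $T_0(\delta)$.

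Fix $\Delta>0$ (the value is immaterial; take $\Delta=1$ for concreteness) and set $Y_j:=X_{j\Delta}$. From \eqref{system} one obtains the recursion $Y_j=\rho Y_{j-1}+\zeta_j$, where $\rho=e^{-\alpha\Delta}$ and $\zeta_j$ are i.i.d.\ $N(0,\sigma_\Delta^2)$ with $\sigma_\Delta^2=(1-\rho^2)/(2\alpha)$. Conditionally on $Y_{j-1}=y$, $Y_j$ is $N(\rho y,\sigma_\Delta^2)$, and by the symmetry and unimodality of the Gaussian density,
\begin{equation*}
\Pnum(|Y_j|<a\mid Y_{j-1}=y)=\Phi\!\left(\tfrac{a-\rho y}{\sigma_\Delta}\right)-\Phi\!\left(\tfrac{-a-\rho y}{\sigma_\Delta}\right)\le 2\Phi(a/\sigma_\Delta)-1=:p(a),
\end{equation*}
the right-hand side being attained at $y=0$. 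Since $X^*_t\ge|Y_j|$ for every $j\le N:=\lfloor t/\Delta\rfloor$, iterating via the Markov property yields $\Pnum(X^*_t<a)\le p(a)^N$.

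Now specialize $a=\delta\log^{1/2}t$ and set $T_0(\delta):=e^{1/\delta}$, so that $\delta\log^{1/2}T_0=\delta^{1/2}$. For $1\le t\le T_0$, the crude single-time estimate suffices: since $\sigma_t^2=(1-e^{-2\alpha t})/(2\alpha)\ge\sigma_1^2>0$,
\begin{equation*}
\Pnum(X^*_t<\delta\log^{1/2}t)\le\Pnum(|X_t|<\delta\log^{1/2}T_0)\le \sqrt{2/\pi}\,\delta^{1/2}/\sigma_1.
\end{equation*}
For $t>T_0$, combine $p(a)^N\le\exp(-2N(1-\Phi(x)))$ with the Mills-ratio lower bound $1-\Phi(x)\ge x\varphi(x)/(1+x^2)$, where $x:=\delta\log^{1/2}t/\sigma_\Delta$ and $\varphi$ is the standard normal density, to obtain
\begin{equation*}
\Pnum(X^*_t<\delta\log^{1/2}t)\le\exp\!\left(-\tfrac{2Nx\varphi(x)}{1+x^2}\right).
\end{equation*}

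The main technical obstacle is to verify that this last exponential is uniformly small for \emph{all} $t>T_0$, not merely for $t$ close to $T_0$, since $x\mapsto 2x\varphi(x)/(1+x^2)$ is non-monotone. I would handle this by splitting at $x=1$, i.e.\ at $t=e^{\sigma_\Delta^2/\delta^2}$: on $x\le 1$ use $2x/(1+x^2)\ge x$ and $\varphi(x)\ge\varphi(1)$, so the exponent is $\gtrsim \delta t\log^{1/2}t$, strictly increasing in $t$; on $x\ge 1$ use $2x/(1+x^2)\ge 1/x$, so the exponent is $\gtrsim t^{1-\delta^2/(2\sigma_\Delta^2)}/\log^{1/2}t$, also increasing for $\delta$ sufficiently small. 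The infimum over $t\ge T_0$ is therefore attained at $t=T_0$, giving a bound of order $\exp(-c\delta^{1/2}e^{1/\delta})$. Combining both regimes, the function $\phi(\delta):=\max\{C\delta^{1/2},\,\exp(-c\delta^{1/2}e^{1/\delta})\}$ satisfies $\phi(\delta)\to 0$ as $\delta\to 0^+$, as required.
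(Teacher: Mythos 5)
Your proof is correct, and it takes a genuinely different route from the paper. The paper's argument is analytic: it solves the ODE $\tfrac12 u''-\alpha x u'=1$ to compute the expected exit time $\Enum\tau_x=u(x)\le C\mathrm{e}^{2\alpha x^2}$ of the symmetric interval $[-x,x]$, and then Chebyshev gives $\Pnum(X^*_t<\delta\log^{1/2}t)=\Pnum(\tau_{\delta\log^{1/2}t}\ge t)\le Ct^{2\alpha\delta^2-1}$, which for small $\delta$ decays like $t^{-1/2}$; the range $1\le t\le T$ is then handled by the single-time bound $\Pnum(X^*_1<\delta\log^{1/2}T)$, exactly as you handle $t\le T_0(\delta)$. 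Your argument replaces the exit-time computation by time discretization: the AR(1) recursion $Y_j=\rho Y_{j-1}+\zeta_j$, the sharp one-step anti-concentration bound $\Pnum(|Y_j|<a\mid Y_{j-1}=y)\le 2\Phi(a/\sigma_\Delta)-1$ (maximized at $y=0$), and the telescoping conditioning that yields $\Pnum(X^*_t<a)\le p(a)^{\lfloor t\rfloor}$. This is more elementary (no Ito's formula, no ODE) and in fact gives a much stronger, stretched-exponential decay in $t$ rather than the paper's polynomial decay, at the cost of the extra bookkeeping you correctly identify: the non-monotonicity of $x\mapsto x\varphi(x)/(1+x^2)$ forces the split at $x=1$, and one must check that the two piecewise lower bounds on the exponent agree at the junction (they do, both equal $\varphi(1)$ there) so that the infimum over $t\ge T_0$ sits at $t=T_0$. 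Both proofs deliver a valid $\phi$ with $\phi(\delta)\to 0$; the paper's is shorter, yours is self-contained at the level of Gaussian estimates and quantifies the decay in $t$ explicitly, which the lemma does not require but which does no harm.
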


\begin{proof}
%Let
%\begin{equation*}
%u(x) = 2\int_0^x\mathrm{e}^{\alpha y^2}dy\int_0^y\mathrm{e}^{-\alpha z^2}dz.
%\end{equation*}
%It is easy to check that $u\in C^2(\Rnum)$ and satisfies the ordinary differential equation
%\begin{equation*}
%\frac{1}{2}u''(x)-\alpha xu'(x) = 1,\;\;\;u(0)=u'(0)=0.
%\end{equation*}
%For any $x\geq 0$, let $\tau_x = \inf\{t\geq 0: |X_t|=x\}$. By Ito's formula, we have
% new 
For any $x\geq 0$, let $\tau_x = \inf\{t\geq 0: |X_t|=x\}$. Following the proof for equation (2.30) in \cite{graversen1998maximal}, we define 
\begin{equation*}
u(x):= 2\int_0^x\mathrm{e}^{\alpha y^2}dy\int_0^y\mathrm{e}^{-\alpha z^2}dz.
\end{equation*}
It is easy to check that $u\in C^2(\Rnum)$ and satisfies the ordinary differential equation
\begin{equation*}
\frac{1}{2}u''(x)-\alpha xu'(x) = 1,\;\;\;u(0)=u'(0)=0.
\end{equation*}
By Ito's formula, we have
% end new 
\begin{equation*}
\Enum u(X_{\tau_x\wedge t}) = \Enum\tau_x\wedge t+\Enum\int_0^{\tau_x\wedge t}u'(X_s)dW_s = \Enum\tau_x\wedge t.
\end{equation*}
Since $u$ is an even function, taking $t\to\infty$ in the above equation gives rise to
\begin{equation*}
\Enum\tau_x = u(x) = 2\int_0^x\mathrm{e}^{\alpha y^2}dy\int_0^y\mathrm{e}^{-\alpha z^2}dz
\leq \sqrt{\frac{\pi}{\alpha}}x\mathrm{e}^{\alpha x^2} \leq  C\mathrm{e}^{2\alpha x^2},
\end{equation*}
where $C$ is a constant independent of $x$. By Chebyshev's inequality, we have
\begin{equation*}
\Pnum(X^*_t<\delta\log^{1/2}t) = \Pnum(\tau_{\delta\log^{1/2}t}\geq t)
\leq t^{-1}\Enum\tau_{\delta\log^{1/2}t} \leq Ct^{2\alpha\delta^2-1}.
\end{equation*}
When $\delta$ is sufficiently small, for any $\epsilon>0$, we can find $T>1$ such that
\begin{equation*}
\sup_{t\geq T}\Pnum(X^*_t<\delta\log^{1/2}t) \leq CT^{-1/2} < \epsilon.
\end{equation*}
On the other hand, when $\delta$ is sufficient small,
\begin{equation*}
\sup_{1\leq t\leq T}\Pnum(X^*_t<\delta\log^{1/2}t) \leq \Pnum(X^*_1<\delta\log^{1/2}T) < \epsilon.
\end{equation*}
Combining the above two inequalities, we obtain the desired result.
\end{proof}

\begin{lemma}\label{lower}
There exists a function $\phi:\Rnum_+\to \Rnum_+$ satisfying $\phi(\delta)\to 0$ as $\delta\to 0$ such that for any $t\geq 2$ and $\delta>0$,
\begin{equation}
\Pnum(X^*_t\geq\delta^{-1}\log^{1/2}t) \leq \phi(\delta).
\end{equation}
\end{lemma}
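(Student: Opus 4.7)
The strategy is to pass through the time-change representation $X_s = (2\alpha)^{-1/2}B_{H(s)}/\sqrt{H(s)+1}$ with $H(s)=\mathrm{e}^{2\alpha s}-1$ (the identity used in the proof of Corollary~\ref{BM}). Since $H$ is continuous and increasing, substituting $u=H(s)$ yields
\begin{equation*}
X^*_t \;=\; \frac{1}{\sqrt{2\alpha}}\sup_{0\le u\le H(t)}\frac{|B_u|}{\sqrt{u+1}}.
\end{equation*}
The naive bound $|X_s|\le (2\alpha)^{-1/2}|B_{H(s)}|$ would force one to apply a Gaussian tail at horizon $H(t)\asymp \mathrm{e}^{2\alpha t}$, which is completely useless at the scale $\lambda=\delta^{-1}\log^{1/2}t$. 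The key is therefore to retain the law-of-iterated-logarithm normalizer $\sqrt{u+1}$ inside the supremum.

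To exploit this I would decompose $[0,H(t)]$ dyadically as $I_0=[0,1]$ and $I_n=[2^{n-1},2^n]$ for $n=1,\ldots,N$, where $N=\max(0,\lceil\log_2 H(t)\rceil)\le C(1+\alpha t)$. On each $I_n$ with $n\ge 1$ one has $\sqrt{u+1}\ge 2^{(n-1)/2}$, so Brownian scaling gives
\begin{equation*}
\sup_{u\in I_n}\frac{|B_u|}{\sqrt{u+1}}\;\le\;2^{-(n-1)/2}\sup_{u\le 2^n}|B_u|\;\stackrel{d}{=}\;\sqrt{2}\,M_0,\qquad M_0:=\sup_{v\le 1}|B_v|,
\end{equation*}
and on $I_0$ the supremum is trivially bounded by $M_0$. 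The reflection principle combined with a Gaussian tail estimate yields $\Pnum(M_0\ge a)\le 4\mathrm{e}^{-a^2/2}$, so each piece contributes at most $C\mathrm{e}^{-\lambda^2/4}$ to the probability of the event $\{X^*_t\ge \delta^{-1}\log^{1/2}t\}$, where $\lambda=\sqrt{2\alpha}\,\delta^{-1}\log^{1/2}t$.

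A union bound over the $O(1+\alpha t)$ pieces then gives
\begin{equation*}
\Pnum\bigl(X^*_t\ge \delta^{-1}\log^{1/2}t\bigr)\;\le\;C(1+\alpha t)\,\mathrm{e}^{-\lambda^2/4}\;=\;C(1+\alpha t)\,t^{-\alpha/(2\delta^2)}.
\end{equation*}
For $\delta$ small enough that $\alpha/(2\delta^2)>1$, the function $t\mapsto(1+\alpha t)\,t^{-\alpha/(2\delta^2)}$ is strictly decreasing on $(0,\infty)$, so for $t\ge 2$ it is bounded above by its value at $t=2$, namely $C(1+2\alpha)\,2^{-\alpha/(2\delta^2)}$. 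Taking $\phi(\delta)$ equal to this expression for small $\delta$, and to $1$ otherwise, yields $\phi(\delta)\to 0$ as $\delta\to 0$. The main obstacle is that the exponential decay in $\delta$ must simultaneously absorb the polynomial factor $1+\alpha t$ coming from the number of dyadic blocks and accommodate the fact that $t\ge 2$ is arbitrary; fortunately the decay rate $\alpha/(2\delta^2)$ itself tends to infinity as $\delta\to 0$, which dispatches both difficulties at once.
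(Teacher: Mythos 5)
Your proof is correct, but it takes a genuinely different route from the paper's. The paper argues softly: it invokes the law of the iterated logarithm to establish $\limsup_{t\to\infty}X^*_t/\log^{1/2}t=1/\sqrt{\alpha}$ almost surely, then handles $t\geq T$ by continuity from above (choosing $T$ so large that $\Pnum(\sup_{t\geq T}X^*_t/\log^{1/2}t\geq 2/\sqrt{\alpha})<\epsilon$, which covers all $\delta\leq\sqrt{\alpha}/2$) and handles $2\leq t\leq T$ by the crude bound $\Pnum(X^*_T\geq\delta^{-1}\log^{1/2}2)$. You instead make the upper half of the LIL quantitative: starting from the same time-change identity \eqref{transform}, you decompose $[0,H(t)]$ dyadically, use Brownian scaling and the reflection principle on each block, and absorb the $O(1+\alpha t)$ blocks into the tail $t^{-\alpha/(2\delta^2)}$, whose exponent blows up as $\delta\to 0$. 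Your computations check out: $\sqrt{u+1}\geq 2^{(n-1)/2}$ on $I_n$, $\sup_{u\leq 2^n}|B_u|\overset{d}{=}2^{n/2}M_0$, $\Pnum(M_0\geq a)\leq 4\mathrm{e}^{-a^2/2}$, and the monotonicity of $t\mapsto(1+\alpha t)t^{-\beta}$ for $\beta>1$ are all valid, so $\phi(\delta)=C(1+2\alpha)2^{-\alpha/(2\delta^2)}$ (capped at $1$ for large $\delta$) works. What your approach buys is an explicit, uniform-in-$t$ rate for $\phi$ and a proof that sidesteps the somewhat delicate quantifier bookkeeping needed to convert an almost-sure $\limsup$ statement into a bound holding for every $t\geq 2$ with a single $\phi$; what it costs is a longer computation where the paper gets by with a two-line appeal to a classical theorem.
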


\begin{proof}
By the law of iterated logarithm of the Brownian motion, we have
\begin{equation*}
\limsup_{t\to\infty}\frac{|B_t|}{\sqrt{2t\log\log t}} = 1,\;\;\;\textrm{a.s}.
\end{equation*}
Thus it follows from \eqref{transform} that
\begin{equation}\label{lim-X}
\limsup_{t\to\infty}\frac{|X_t|}{\log^{1/2}t}
= \frac{1}{\sqrt{2\alpha}}\limsup_{t\to\infty}\frac{|B_{\mathrm{e}^{2\alpha t}-1}|}{\mathrm{e}^{\alpha t}\log^{1/2}t}
= \frac{1}{\sqrt{\alpha}},\;\;\;\textrm{a.s}.
\end{equation}
%new 
Obviously, $\limsup_{t\to\infty}\frac{X^*_t}{\log^{1/2}t} \geq \frac{1}{\sqrt{\alpha}},$ a.s..  On the other hand, let $k(t):=\inf\{s: |X_s|=X^*_t\}$,  noticing any continuous function can attain its maximum over a closed interval, $k(t)$ is well-defined and $k(t)\leq t$. By \eqref{lim-X}, it is easy to see $k(t)\to\infty$ as $t\to\infty$, so  
$$\limsup_{t\to\infty} \frac{X^*_t}{\log^{1/2} t}\leq \limsup_{t\to\infty} \frac{X_{k(t)}}{\log^{1/2}k(t)}\leq \limsup_{t\to\infty} \frac{|X_t|}{\log^{1/2} t}=\frac{1}{\sqrt{\alpha}}. $$
To sum up, we have
%end new 
\begin{equation*}
\limsup_{t\to\infty}\frac{X^*_t}{\log^{1/2}t} = \frac{1}{\sqrt{\alpha}},\;\;\;\textrm{a.s}.. 
\end{equation*}
When $\delta$ is sufficiently small, for any $\epsilon>0$, we can find $T\geq 2$ such that
\begin{equation*}
\sup_{t\geq T}\Pnum(X^*_t\geq\delta^{-1}\log^{1/2}t)
\leq \Pnum\left(\sup_{t\geq T}\frac{X^*_t}{\log^{1/2}t}\geq\frac{2}{\sqrt{\alpha}}\right) < \epsilon.
\end{equation*}
On the other hand, when $\delta$ is sufficient small, we have
\begin{equation*}
\sup_{2\leq t\leq T}\Pnum(X^*_t\geq \delta^{-1}\log^{1/2}t) \leq \Pnum(X_T^*\geq\delta^{-1}\log^{1/2}2) < \epsilon.
\end{equation*}
Combining the above two inequalities, we obtain the desired result.
\end{proof}

To proceed, we need a classical result, whose proof can be found in \cite[Chapter IV, Lemma 4.9]{revuz1999continuous}.
\begin{lemma}\label{goodlambda}
Let ${\phi\colon{\Rnum}_+\rightarrow{\Rnum}_+} $ satisfy ${\phi(\delta)\rightarrow 0}$ as ${\delta\rightarrow 0}$. Assume that a pair of nonnegative random variables $(X,Y)$ satisfies the following good $\lambda$ inequality for any $\delta,\lambda>0$:
\begin{equation*}
\Pnum(X\geq 2\lambda,Y<\delta\lambda) \leq \phi(\delta)\Pnum(X\geq\lambda).
\end{equation*}
Then for any moderate function $f$, there exists a positive constant $C$ depending on $f$ and $\phi$ such that
\begin{equation*}
\Enum f(X)\leq C\Enum f(Y).
\end{equation*}
\end{lemma}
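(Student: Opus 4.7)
The plan is to derive the inequality via the standard layer-cake representation of $\Enum f(\cdot)$, a direct decomposition of the event $\{X\geq 2\lambda\}$, and the doubling property built into the moderate condition. Since $f$ is continuous, increasing and vanishes at $0$, it induces a Borel measure $df$ on $\Rnum_+$, and for any nonnegative random variable $Z$ one has $\Enum f(Z)=\int_0^\infty \Pnum(Z\geq \lambda)\,df(\lambda)$ by Fubini; a change of variables shows moreover that $\int_0^\infty \Pnum(Z\geq a\lambda)\,df(\lambda)=\Enum f(Z/a)$ for any $a>0$.

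I would first write $\Pnum(X\geq 2\lambda)\leq \Pnum(X\geq 2\lambda,\,Y<\delta\lambda)+\Pnum(Y\geq \delta\lambda)$, apply the good-$\lambda$ hypothesis to the first term, and integrate against $df(\lambda)$ to obtain
$$\Enum f(X/2)\leq \phi(\delta)\,\Enum f(X)+\Enum f(Y/\delta).$$
Next, I would use the moderate condition $f(\lambda_0 t)\leq \gamma f(t)$ iteratively: picking $n$ with $\lambda_0^n\geq 2$ yields $f(2t)\leq K f(t)$ with $K:=\gamma^n$, so $\Enum f(X)\leq K\Enum f(X/2)$. Similarly, for $\delta\in(0,1)$, picking $n(\delta)$ with $\lambda_0^{n(\delta)}\geq 1/\delta$ gives $f(t/\delta)\leq c(\delta) f(t)$ with $c(\delta):=\gamma^{n(\delta)}$, hence $\Enum f(Y/\delta)\leq c(\delta)\Enum f(Y)$. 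Substituting,
$$\Enum f(X)\leq K\phi(\delta)\,\Enum f(X)+K c(\delta)\,\Enum f(Y).$$
Since $\phi(\delta)\to 0$, one can fix $\delta$ small enough that $K\phi(\delta)\leq 1/2$; the first term on the right is then absorbed into the left, yielding $\Enum f(X)\leq 2Kc(\delta)\Enum f(Y)=:C\Enum f(Y)$.

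The delicate point, and essentially the main obstacle, is that the absorption step presupposes $\Enum f(X)<\infty$. I would handle this by a truncation argument: replace $X$ with $X_n:=X\wedge n$. The pair $(X_n,Y)$ still satisfies the good-$\lambda$ hypothesis, since for $\lambda\leq n/2$ the events $\{X_n\geq 2\lambda\}$ and $\{X_n\geq \lambda\}$ agree with those for $X$, while for $\lambda>n/2$ the event $\{X_n\geq 2\lambda\}$ is empty and the inequality is trivial. Because $\Enum f(X_n)\leq f(n)<\infty$, the absorption argument applied to $(X_n,Y)$ produces $\Enum f(X_n)\leq C\Enum f(Y)$ with $C$ independent of $n$; monotone convergence as $n\to\infty$ concludes the proof. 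A side benefit of recording the dependence is that $C$ depends only on the parameters $(\lambda_0,\gamma)$ of $f$ and on a single value $\phi(\delta_0)$ of $\phi$, matching the lemma's assertion that $C$ depends only on $f$ and $\phi$.
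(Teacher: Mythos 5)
Your proof is correct and complete, including the two points that actually require care (the truncation $X_n = X\wedge n$ to justify the absorption step, and the verification that the truncated pair still satisfies the good-$\lambda$ hypothesis). The paper does not prove this lemma itself but cites Revuz--Yor (Chapter IV, Lemma 4.9), and your argument is essentially that standard proof: layer-cake integration of the decomposition $\Pnum(X\geq 2\lambda)\leq\Pnum(X\geq 2\lambda,\,Y<\delta\lambda)+\Pnum(Y\geq\delta\lambda)$, the doubling property of moderate functions, and absorption.
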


In order to use the above lemma to prove our main results, we still need the following two lemmas.
\begin{lemma}\label{first}
There exists a function $\phi:\Rnum_+\to\Rnum_+$ satisfying $\phi(\delta)\to 0$ as $\delta\to 0$ such that for any stopping time $\tau$ with respect to $\{\mathscr{F}_t\}$ and any $\delta,\lambda>0$, the following good $\lambda$ inequality holds:
\begin{equation*}
\Pnum(\log^{1/2}(1+\alpha\tau)\geq 2 \lambda,X_\tau^*<\delta\lambda)
\leq \phi(\delta)\Pnum(\log^{1/2}(1+\alpha\tau)\geq\lambda).
\end{equation*}
\end{lemma}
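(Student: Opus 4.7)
The plan is to exploit the Markov property of the OU process at the deterministic time $\sigma_\lambda := (\mathrm{e}^{\lambda^2}-1)/\alpha$, observing that $\{\log^{1/2}(1+\alpha\tau)\geq k\lambda\} = \{\tau\geq\sigma_{k\lambda}\}$ for $k=1,2$. Set $s := \sigma_{2\lambda}-\sigma_\lambda$ and $\widetilde{W}_r := W_{\sigma_\lambda+r}-W_{\sigma_\lambda}$, which is independent of $\mathscr{F}_{\sigma_\lambda}$. The solution formula \eqref{Eq-X} then gives the additive decomposition
\[
X_{\sigma_\lambda+t} = \mathrm{e}^{-\alpha t}X_{\sigma_\lambda} + Y_t, \qquad Y_t := \int_0^t \mathrm{e}^{-\alpha(t-r)}\,d\widetilde{W}_r,
\]
where the auxiliary OU process $Y$ starts from zero, has the same law as $X$, and is independent of $\mathscr{F}_{\sigma_\lambda}$.

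On the event $\{\tau\geq\sigma_{2\lambda},\,X^*_\tau<\delta\lambda\}$ both $|X_{\sigma_\lambda}|$ and $\sup_{0\leq t\leq s}|X_{\sigma_\lambda+t}|$ are bounded by $\delta\lambda$, so the triangle inequality $|Y_t|\leq|X_{\sigma_\lambda+t}|+|X_{\sigma_\lambda}|$ forces $Y^*_s<2\delta\lambda$. Since $\{\tau\geq\sigma_\lambda\}\in\mathscr{F}_{\sigma_\lambda}$ and $Y$ is independent of $\mathscr{F}_{\sigma_\lambda}$, this yields the factorization
\[
\Pnum(\tau\geq\sigma_{2\lambda},\,X^*_\tau<\delta\lambda) \leq \Pnum(\tau\geq\sigma_\lambda)\,\Pnum(X^*_s<2\delta\lambda),
\]
reducing the problem to finding $\phi(\delta)\to 0$ with $\Pnum(X^*_s<2\delta\lambda)\leq\phi(\delta)$ uniformly in $\lambda>0$.

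This uniform bound will be obtained by splitting at a threshold $\lambda_0=\lambda_0(\alpha)$. For $\lambda\geq\lambda_0$, a direct computation yields $s\geq 1$ and $\log s\geq 2\lambda^2$, whence $2\delta\lambda\leq\sqrt{2}\,\delta\log^{1/2}s$, and Lemma \ref{upper} supplies $\Pnum(X^*_s<2\delta\lambda)\leq\phi_0(\sqrt{2}\delta)$. For $\lambda<\lambda_0$, I will use that $\lambda^2/s = \alpha\lambda^2/(\mathrm{e}^{4\lambda^2}-\mathrm{e}^{\lambda^2})$ is a bounded continuous function on $(0,\infty)$ (with limit $\alpha/3$ at $0$ and $0$ at $\infty$), then apply a Gaussian density estimate to the centered normal variable $X_s$ with variance $(1-\mathrm{e}^{-2\alpha s})/(2\alpha)$ to obtain $\Pnum(X^*_s<2\delta\lambda)\leq\Pnum(|X_s|<2\delta\lambda)\leq C_\alpha\delta$.

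The main obstacle is producing a single $\phi(\delta)$ valid across both regimes. Lemma \ref{upper} gives no useful information once $\log^{1/2}s$ fails to dominate $\lambda$ (which happens as $\lambda\to 0$, since then $s\approx 3\lambda^2/\alpha\to 0$), while the Gaussian density estimate grows linearly in $\lambda$ once $s$ is large enough that $(1-\mathrm{e}^{-2\alpha s})/(2\alpha)$ saturates to $1/(2\alpha)$; the two estimates must be pasted together at $\lambda_0$, yielding $\phi(\delta)=\max\{C_\alpha\delta,\,\phi_0(\sqrt{2}\delta)\}$, which tends to zero as $\delta\to 0$.
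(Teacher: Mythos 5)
Your argument is correct and follows essentially the same route as the paper: conditioning at the deterministic time $\sigma_\lambda=\alpha^{-1}(\mathrm{e}^{\lambda^2}-1)$, absorbing the initial position at the cost of doubling $\delta$, and splitting at a threshold $\lambda_0(\alpha)$ with Lemma \ref{upper} handling the large-$\lambda$ regime. The only immaterial difference is in the small-$\lambda$ regime, where the paper compares $X^*$ with $W^*$ via $(1+\alpha t)X^*_t\geq W^*_t$ and Brownian scaling, while you bound $\Pnum(X^*_s<2\delta\lambda)\leq\Pnum(|X_s|<2\delta\lambda)$ by the Gaussian density of $X_s$ together with $s\asymp\lambda^2$; both yield an $O(\delta)$ bound there.
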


\begin{proof}
It is easy to see that
\begin{equation*}
\Pnum_0(\log^{1/2}(1+\alpha\tau)\geq 2\lambda,X_\tau^*<\delta\lambda)
\leq \Pnum_0(\tau\geq r,X^*_s<\delta\lambda),
\end{equation*}
where $r = \alpha^{-1}(\mathrm{e}^{\lambda^2}-1)$ and $s = \alpha^{-1}(\mathrm{e}^{4\lambda^2}-1)$. By the Markov property of $X$, we have
\begin{equation*}
\begin{split}
\Pnum_0(\log^{1/2}(1+\alpha\tau)\geq 2 \lambda,X^*_\tau<\delta\lambda)
&\leq \Enum_0\left[1_{\{\tau\geq r\}}\Pnum_0(X^*_s<\delta\lambda|\mathscr{F}_r)\right]\\
&\leq \Enum_0\left[1_{\{\tau\geq r\}}\Pnum_{X_r}(X^*_{s-r}<\delta\lambda)\right]\\
&\leq \sup_{|x|<\delta\lambda}\Pnum_x(X^*_{s-r}<\delta\lambda)\Pnum_0(\tau\geq r)
\end{split}
\end{equation*}
For any $x\in\Rnum$, let $X^x$ be the solution to the following stochastic differential equation:
\begin{equation*}
dX^x_t = -\alpha X^x_tdt+dW_t,\;\;\;X^x_0 = x.
\end{equation*}
Then it is easy to check that $X^x_t = X^0_t+x\mathrm{e}^{-t}$ for any $x\in\Rnum$ and $t\geq 0$. This suggests that
\begin{equation*}
\sup_{|x|<\delta\lambda}\Pnum_x(X^*_{s-r}<\delta\lambda) \leq \Pnum_0(X^*_{s-r}<2\delta\lambda).
\end{equation*}
Thus we obtain that
\begin{equation*}
\begin{split}
\Pnum_0(\log^{1/2}(1+\alpha\tau)\geq 2\lambda,X^*_\tau<\delta\lambda)
\leq \Pnum_0(X^*_{s-r}<2\delta\lambda)\Pnum_0(\log^{1/2}(1+\alpha\tau)\geq\lambda).
\end{split}
\end{equation*}
For convenience, set $\lambda_0 = \log^{1/2}(\alpha+1)$. We first consider the case of $\lambda>\lambda_0$. In this case, we have $s-r\geq \mathrm{e}^{\lambda^2}\geq 1$. Thus it follows from Lemma \ref{upper} that
\begin{equation}\label{part11}
\Pnum_0(X^*_{s-r}<2\delta\lambda) \leq \Pnum_0(X^*_{\mathrm{e}^{\lambda^2}}<2\delta\lambda) \leq \phi(2\delta),
\end{equation}
where $\phi$ is the function defined in Lemma \ref{upper}. We next consider the case of $0<\lambda\leq\lambda_0$. In this case, we have $s-r = \alpha^{-1}(\mathrm{e}^{4\lambda^2}-\mathrm{e}^{\lambda^2}) \geq c\lambda^2$, where $c$ is a positive constant independent of $\lambda$. It is easy to see from \eqref{system} that for any $t\geq 0$,
\begin{equation*}
(1+\alpha t)X^*_t \geq W^*_t.
\end{equation*}
This suggests that
\begin{equation}\label{part12}
\begin{split}
\Pnum_0(X^*_{s-r}<2\delta\lambda) &\leq \Pnum_0(X_{c\lambda^2}^*<2\delta\lambda)
\leq \Pnum_0(W^*_{c\lambda^2}<2(1+\alpha c\lambda_0^2)\delta\lambda)\\
&\leq \Pnum_0(|W_{c\lambda^2}|<2(1+\alpha c\lambda_0^2)\delta\lambda)
= \Pnum_0(|W_c|<2(1+\alpha c\lambda_0^2)\delta),
\end{split}
\end{equation}
which tends to zero as $\delta\to 0$. Combining \eqref{part11} and \eqref{part12}, we obtain the desired result.
\end{proof}

\begin{lemma}\label{second}
There exists a function $\phi:\Rnum_+\to\Rnum_+$ satisfying $\phi(\delta)\to 0$ as $\delta\to 0$ such that for any stopping time $\tau$ with respect to $\{\mathscr{F}_t\}$ and any $\delta,\lambda>0$, the following good $\lambda$ inequality holds:
\begin{equation*}
\Pnum(X^*_\tau\geq 2\lambda,\log^{1/2}(1+\alpha\tau)<\delta\lambda) \leq \phi(\delta)\Pnum(X^*_\tau\geq\lambda).
\end{equation*}
\end{lemma}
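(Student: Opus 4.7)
The approach mirrors the proof of Lemma \ref{first}, with the roles of ``small'' and ``large'' interchanged: the deterministic threshold time $r$ there becomes the hitting time $\sigma=\inf\{t\geq 0:|X_t|\geq\lambda\}$, and Lemma \ref{upper} is replaced by Lemma \ref{lower}. Setting $r=\alpha^{-1}(\mathrm{e}^{\delta^2\lambda^2}-1)$, the condition $\log^{1/2}(1+\alpha\tau)<\delta\lambda$ is equivalent to $\tau<r$, while $\{X^*_\tau\geq\lambda\}=\{\sigma\leq\tau\}$ by continuity of $X$. On the event $\{X^*_\tau\geq 2\lambda,\tau<r\}$ one necessarily has $\sigma\leq\tau<r$ and $\sup_{\sigma\leq t\leq\sigma+r}|X_t|\geq 2\lambda$.

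The first step is to apply the strong Markov property at $\sigma$. Using that $\{\sigma\leq\tau\}\cap\{\sigma<r\}\in\mathscr{F}_\sigma$,
\[
\Pnum(X^*_\tau\geq 2\lambda,\tau<r)\leq \Enum\left[1_{\{\sigma\leq\tau,\,\sigma<r\}}\Pnum_{X_\sigma}(X^*_r\geq 2\lambda)\right].
\]
On $\{\sigma<\infty\}$ one has $|X_\sigma|=\lambda$, so the shift identity $X^x_t=X^0_t+x\mathrm{e}^{-\alpha t}$ already used in the proof of Lemma \ref{first}, combined with the triangle inequality and $\mathrm{e}^{-\alpha t}\leq 1$, gives $\Pnum_y(X^*_r\geq 2\lambda)\leq \Pnum_0(X^*_r\geq\lambda)$ for every $y$ with $|y|=\lambda$. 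Hence
\[
\Pnum(X^*_\tau\geq 2\lambda,\tau<r)\leq \Pnum_0(X^*_r\geq\lambda)\,\Pnum(X^*_\tau\geq\lambda),
\]
reducing the problem to bounding $\Pnum_0(X^*_r\geq\lambda)$ by a function of $\delta$ alone that vanishes at $0$.

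For this last bound I would split into two regimes. When $r\geq 2$, the relation $\delta\lambda=\log^{1/2}(1+\alpha r)$ combined with the elementary estimate $\log r\leq K_\alpha\log(1+\alpha r)$ on $[2,\infty)$ yields $\lambda\geq (K_\alpha^{1/2}\delta)^{-1}\log^{1/2}r$, so Lemma \ref{lower} applies directly to give a bound $\phi(K_\alpha^{1/2}\delta)\to 0$. When $r<2$ Lemma \ref{lower} is vacuous, and I would fall back on the time-change representation \eqref{transform} to get $X^*_r\leq (2\alpha)^{-1/2}B^*_{\mathrm{e}^{2\alpha r}-1}$, then apply the reflection principle and the Gaussian tail bound; since $\alpha r=\mathrm{e}^{\delta^2\lambda^2}-1$ is comparable to $\delta^2\lambda^2$ on the bounded range forced by $r<2$, the resulting exponent is of order $1/\delta^2$ and the estimate decays super-exponentially as $\delta\to 0$. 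The short-time regime is the main obstacle, because Lemma \ref{lower} is unavailable there and one must carefully track how $\lambda$, $r$ and $\delta$ are coupled through $\alpha r=\mathrm{e}^{\delta^2\lambda^2}-1$ in order to extract an estimate that is uniform in $\lambda$ and depends only on $\delta$ and $\alpha$; the measurability and shift-invariance bookkeeping in the strong Markov step is routine by comparison.
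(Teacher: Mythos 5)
Your argument is correct and follows essentially the same route as the paper: stop at the first hitting time of level $\lambda$, apply the strong Markov property together with the shift identity $X^x_t=X^0_t+x\mathrm{e}^{-\alpha t}$ to reduce everything to bounding $\Pnum_0(X^*_r\geq\lambda)$ with $r=\alpha^{-1}(\mathrm{e}^{\delta^2\lambda^2}-1)$, then split into a large-time regime handled by Lemma \ref{lower} and a short-time regime handled by a Gaussian tail estimate. The only cosmetic difference is in the short-time regime, where the paper uses the pathwise Gronwall bound $X^*_t\leq\mathrm{e}^{\alpha t}W^*_t$ while you use the time-change representation \eqref{transform}; both yield the same scaling in $\delta$.
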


\begin{proof}
Let $\tau_\lambda = \inf\{t\geq 0: |X_t|=\lambda\}$. It is easy to see that
\begin{equation*}
\Pnum_0(X^*_\tau\geq 2\lambda,\log^{1/2}(1+\alpha\tau)<\delta\lambda)
\leq \Pnum_0(X^*_{s\vee\tau_\lambda}\geq 2\lambda,\tau>\tau_\lambda),
\end{equation*}
where $s = \alpha^{-1}(\mathrm{e}^{\delta^2\lambda^2}-1)$. By the strong Markov property of $X$, we have
\begin{equation*}
\begin{split}
\Pnum_0(X^*_\tau\geq 2\lambda,\log^{1/2}(1+\alpha\tau)<\delta\lambda)
&\leq \Enum_0\left[1_{\{\tau>\tau_\lambda\}}
\Pnum_0(X^*_{s\vee\tau_\lambda}\geq 2\lambda|\mathscr{F}_{\tau_\lambda})\right]\\
&\leq \Enum_0\left[1_{\{\tau>\tau_\lambda\}}
\Pnum_{X_{\tau_\lambda}}(X^*_{s\vee\tau_\lambda-\tau_\lambda}\geq 2\lambda)\right]\\
&\leq \sup_{|x|=\lambda}\Pnum_x(X^*_{s\vee\tau_\lambda-\tau_\lambda}\geq 2\lambda)\Pnum_0(\tau>\tau_\lambda).
\end{split}
\end{equation*}
Since $X^x_t = X^0_t+x\mathrm{e}^{-t}$ for any $x\in\Rnum$ and $t\geq 0$, it is easy to check that
\begin{equation*}
\sup_{|x|=\lambda}\Pnum_x(X^*_{s\vee\tau_\lambda-\tau_\lambda}\geq 2\lambda)
\leq \Pnum_0(X^*_{s\vee\tau_\lambda-\tau_\lambda}\geq \lambda) \leq \Pnum_0(X^*_s\geq \lambda).
\end{equation*}
This shows that
\begin{equation*}
\begin{split}
\Pnum_0(X^*_\tau\geq 2\lambda,\log^{1/2}(1+\tau)<\delta\lambda)
&\leq \Pnum_0(X^*_s\geq \lambda)\Pnum_0(X_\tau^*\geq\lambda).
\end{split}
\end{equation*}
For convenience, set $\lambda_0 = \log^{1/2}(2\alpha+\alpha^{-1}+1)$. We first consider the case of $\delta\lambda>\lambda_0$. In this case, we have $2\leq s\leq \mathrm{e}^{4\delta^2\lambda^2}$. Thus it follows from Lemma \ref{lower} that
\begin{equation}\label{part21}
\Pnum_0(X^*_s\geq\lambda) \leq \Pnum_0(X^*_{\mathrm{e}^{4\delta^2\lambda^2}}\geq\lambda)\leq \phi(2\delta),
\end{equation}
where $\phi$ is the function defined in Lemma \ref{lower}. We next consider the case of $0<\delta\lambda\leq\lambda_0$. In this case, we have $s\leq C\delta^2\lambda^2\leq C\lambda_0^2$, where $C$ is a positive constant independent of $\delta$ and $\lambda$. By Gronwall's inequality, it is easy to see from \eqref{system} that for any $t\geq 0$,
\begin{equation*}
X^*_t \leq \mathrm{e}^{\alpha t}W^*_t.
\end{equation*}
This suggests that
\begin{equation}\label{part22}
\begin{split}
\Pnum_0(X^*_s\geq\lambda) &\leq \Pnum_0(X^*_{C\delta^2\lambda^2}\geq\lambda)
\leq \Pnum_0(W^*_{C\delta^2\lambda^2}\geq\lambda \mathrm{e}^{-\alpha C\lambda_{0} ^2})\\
&\leq \Pnum_0(|W_{C\delta^2\lambda^2}|\geq\lambda \mathrm{e}^{-\alpha C\lambda_{0} ^2})
= \Pnum_0(|W_C|\geq\delta^{-1}\mathrm{e}^{-\alpha C\lambda_{0} ^2}),
\end{split}
\end{equation}
which tends to zero as $\delta\to 0$. Combining \eqref{part21} and \eqref{part22}, we obtain the desired result.
\end{proof}

We are now in a position to prove our main theorem.
\begin{proof}[Proof of Theorem \ref{main}]
The first part of the theorem follows directly from Lemmas \ref{goodlambda}, \ref{first}, and \ref{second}. We next prove the second part of the theorem. Let $Y$ be the Ornstein-Uhlenbeck process solving the stochastic differential equation
%\begin{equation*}
%dY_t = -Y_tdt+dW_t,\;\;\;Y_0 = 0,
%\end{equation*}
%It follows from \eqref{transform} that $X_t = Y_{\alpha t}/\sqrt{2\alpha}$ for any $t\geq 0$. 
% new 
$$
dY_t = -Y_tdt+d\tilde W_t,\;\;\;Y_0 = 0, 
$$
where $\tilde W_t=\sqrt{\alpha} W_{t/\alpha}$. By the explicit expression \eqref{Eq-X} and basic calculation, one can see $X_t = Y_{\alpha t}/\sqrt{\alpha}$.   
% end new 
And for any stopping time $\tau$ of $X$, it is easy to check that $\alpha\tau$ is a stopping time of $Y$. If we take $f(t) = t^p$ with $p>0$, then there exists positive constants $c_p$ and $C_p$ independent of $\alpha$ such that
\begin{equation*}
c_p\Enum\log^{p/2}(1+\alpha\tau) \leq \Enum\left[\sup_{0\leq t\leq\tau}|Y_{\alpha t}|^p\right] \leq C_p\Enum\log^{p/2}(1+\alpha\tau),
\end{equation*}
which gives the desired result.
\end{proof}

\section*{Acknowledgements}
The authors gratefully acknowledge X. Chen and X. Xu for stimulating discussions. This work was supported by National Postdoctoral Program for Innovative Talents (201600182) of China.

%%%%%%%%%% ²Î¿ŒÎÄÏ×žñÊœ %%%%%%%%%%
\setlength{\bibsep}{5pt}
\small\bibliographystyle{urnst}

\end{document}